\newtheorem{thm}{Theorem}[section]
\newtheorem{lem}[thm]{Lemma}
\newtheorem{quest}[thm]{Question}
\theoremstyle{definition}
\newtheorem{rem}[thm]{Remark}
\newtheorem*{ack}{Acknowledgement}
\newtheorem{case}{Case}
\numberwithin{equation}{section}
\numberwithin{figure}{section}
\def\trace{{\text{\rm{trace}}}}
\def\rchi{{\hbox{\raise1.5pt\hbox{$\chi$}}}}
\def\Aut{{\text{\rm{Aut}}}}
\def\const{{\text{\rm{const}}}}
\def\a{\alpha}
\def\b{\beta}
\def\gam{\gamma}
\def\Gam{\Gamma}
\newcommand{\Mbar}{{\overline{\mathcal{M}}}}
\newcommand{\bP}{{\mathbb{P}}}
\newcommand{\bC}{{\mathbb{C}}}
\newcommand{\bR}{{\mathbb{R}}}
\newcommand{\bZ}{{\mathbb{Z}}}
\newcommand{\cM}{{\mathcal{M}}}
\newcommand{\cD}{{\mathcal{D}}}
\newcommand{\cH}{{\mathcal{H}}}
\newcommand{\la}{{\langle}}
\newcommand{\ra}{{\rangle}}
\newcommand{\half}{{\frac{1}{2}}}
\begin{document}
\large
\setcounter{section}{0}

\title[Laplace transform and Eynard-Orantin recursion]
{The Laplace transform, mirror symmetry, and the topological recursion of 
 Eynard-Orantin}

\author[M.\ Mulase]{Motohico Mulase}
\address{
Department of Mathematics\\
University of California\\
Davis, CA 95616--8633, U.S.A.}
\email{mulase@math.ucdavis.edu}

\begin{abstract}
This paper is based on the author's talk
at the 2012
Workshop on Geometric Methods in Physics held
 in Bia\l owie\.za, Poland.
The aim of the talk is to 
introduce the audience to the
Eynard-Orantin topological recursion. The 
 formalism
is originated 
in random matrix theory.  It has been predicted,
and in some cases it has been
proven, that the theory provides an effective 
 mechanism to calculate
certain quantum invariants and a solution
to enumerative geometry problems, such as 
open Gromov-Witten invariants of 
 toric Calabi-Yau threefolds, 
single and double  
Hurwitz numbers,  the number of
lattice points on the moduli space of 
smooth  algebraic curves,
and quantum knot invariants.
In this paper we use the Laplace
transform of generalized Catalan
numbers of an arbitrary genus as an example, and
present the Eynard-Orantin recursion. We examine
 various aspects of the theory, such
as its relations to mirror symmetry,
Gromov-Witten invariants,
integrable hierarchies such as the KP equations,
and the Schr\"odinger equations. 
\end{abstract}

\subjclass[2000]{Primary: 14H15, 14N35, 05C30, 11P21;
Secondary: 81T30}

\maketitle

\allowdisplaybreaks

\tableofcontents

\section{Introduction}
\label{sect:intro}

The purpose of this paper is to give an 
introduction to the Eynard-Orantin
topological recursion 
\cite{EO1}, by going through a simple 
mathematical example. 
Our example is constructed from
the Catalan numbers, 
their higher-genus analogues, and 
the mirror symmetry of these
numbers.

There have been exciting new
developments around the Eynard-Orantin
 theory in the last
few years that involve
various quantum topological invariants, such
as single and double 
Hurwitz numbers, 
open Gromov-Witten invariants, and 
 quantum knot polynomials. 
A big picture is being proposed, from which, 
for example,
we can understand
  the relation between the A-polynomial 
 \cite{CCGLS} of a knot 
and  its 
colored Jones polynomials as the same as the
\textbf{mirror symmetry} in string theory.

From the rigorous 
mathematical point of view, the  predictions 
on this subject coming from physics  
are conjectural.
In mathematics we need  a simple 
 example, for which
we can prove all the predicted
properties, and from which we
can see what is
 going on in a more general 
 context. The aim of this paper
is to present such an example of the 
Eynard-Orantin theory.

The formalism of our interest is originated
 in the large $N$ asymptotic analysis  of 
the correlation functions of resolvents of 
a random matrix of size $N\times N$
\cite{AMM, E2004}. The motivation of
Eynard and Orantin 
\cite{EO1} is to find applications
of the computational mechanism
beyond random matrix theory.
Their formula takes the shape of an integral
recursion equation on a given Riemann 
surface $\Sigma$
called the \textbf{spectral curve}
of the theory. 
At that time 
already  Mari\~no was developing the idea of 
 \textbf{remodeled B-model} of 
topological string theory on a Riemann
surface $\Sigma$ in \cite{M2}. He 
noticed
the geometric significance of \cite{EO1}, and
formulated a precise theory of remodeling
B-model
with Bouchard, Klemm,  and Pasquetti in
\cite{BKMP}. 
This work immediately attracted the attention of the 
mathematics community. 
The currently accepted picture
 is that the remodeled
B-model defines symmetric differential
forms  on $\Sigma$ via the Eynard-Orantin
recursion, and that these differentials forms are the 
\textbf{Laplace transform} of the quantum 
topological invariants that appear on the 
A-model side of the story. In this context 
\emph{the Laplace transform plays the role of the
mirror symmetry}.

This picture tells us that once we identify the
spectral curve $\Sigma$, we can calculate the 
quantum topological invariants in terms of
complex analysis on $\Sigma$.
The effectiveness of this mechanism has been 
mathematically proven for single
Hurwitz numbers \cite{EMS, MZ}, 
orbifold (or double)
Hurwitz numbers \cite{BHLM}, 
enumeration of the
lattice points of $\cM_{g,n}$
\cite{CMS, N1, N2}, the Poincar\'e polynomials
of  $\cM_{g,n}$ \cite{MP2012}, 
  the Weil-Petersson volume and its
  higher analogues of  $\Mbar_{g,n}$
  \cite{EO2, LX, Mir1, Mir2, MS},
and the higher-genus Catalan numbers
\cite{DMSS}. 
A spectacular conjecture of \cite{BKMP} states
that the Laplace transform of
the  open Gromov-Witten invariants of 
an arbitrary toric Calabi-Yau threefold 
satisfies the Eynard-Orantin topological 
recursion. A significant progress toward this
conjecture has been made in \cite{EO3}.

Furthermore, an unexpected  application of the 
Eynard-Orantin
theory has been proposed in knot theory
\cite{AV, BE2, BEM, DFM, FGS1, GS}. A
 key ingredient there is the \textbf{quantum curve}
 that characterizes
 quantum knot invariants.

The word \emph{quantum} means many different 
things in modern mathematics.
For example, a quantum curve is a 
holonomic system of linear
differential equations whose Lagrangian is an 
algebraic curve embedded
in the cotangent bundle of a base curve. 
Quantum knot invariants, on
the other hand, are invariants of knots defined by
representation theory of quantum algebras, and 
quantum algebras are
deformations of usual algebras. In such a diverse 
usage, the only
common feature is the aspect of non-commutative 
deformations.
Therefore, when two completely different quantum 
objects turn out
to be the same, we expect a deep mathematical theory
  behind
the scene. 
 In this vein, within the last two years, mathematicians
and physicists have discovered a new, miraculous 
 mathematical procedure,
although still conjectural, 
 that directly relates quantum curves
and quantum knot invariants.

    The notion of quantum curves appeared 
    in Aganagic, Dijkgraaf, Klemm,
    Mari\~no, and Vafa \cite{ADKMV}, and 
    later in Dijkgraaf, Hollands, Su\l kowski, and Vafa
     \cite{DHS, DHSV}. 
     When the A-model we start with has a
     vanishing obstruction class in algebraic
     K-theory, then it is expected that 
 a quantum curve exists, and it
  is a differential operator. 
 Let us call it $P$.
A quantum knot invariant is a function. 
 Call it $Z$. Then the
conjectural relation is simply
  the \textbf{Schr\"odinger equation} $PZ = 0$.
For this equation to make sense, in addition to the
very existence of $P$, 
   we need to identify the variables
appearing in $P$ and $Z$. The key observation 
   is that both $P$ and $Z$ are
defined on the same Riemann surface, 
   and that it is exactly the
spectral curve of the Eynard-Orantin topological 
   recursion, being
realized as a Lagrangian immersion.
   Moreover, the total symbol of the operator 
   $P$ defines the Lagrangian immersion.

What is the significance
of this Schr\"odinger equation $PZ = 0$, then?
      Recently Gukov and Su\l kowski \cite{GS}, 
based on 
      \cite{DFM}, 
      provided the crucial
insight that when the underlying 
spectral curve is
defined by the 
      A-polynomial of a knot,
      the algebraic K-theory obstruction vanishes,
      and
 the equation $PZ = 0$
       becomes the same as the AJ-conjecture of
Garoufalidis \cite{Gar}. This means that
the Eynard-Orantin
       theory
       conjecturally computes  colored Jones
       polynomial as
       the partition function $Z$ of 
the theory, starting from a given A-polynomial.

In what follows, we present a simple example
of the story. Although our example
 is not related to 
knot theory, it exhibits all key ingredients of the 
theory, such as the Schr\"odinger equation, 
relations to quantum topological invariants, 
the Eynard-Orantin recursion, 
the KP equations, and mirror symmetry.

At the Bia\l owie\.za Workshop in 
summer 2012, Professor
L.~D.~Faddeev gave a beautiful talk on the
quantum dilogarithm, Bloch groups, and
algebraic K-theory \cite{F}. Our example of this
paper does not illustrate the 
fundamental connection to 
these  important subjects, 
because our spectral curve (\ref{eq:x=z+1/z}) 
has genus $0$, and the K-theoretic obstruction to 
quantization, similar to the idea of
$K_2$-Lagrangian of Kontsevich,
 vanishes. Further developments
are expected
in this direction.

\section{Mirror dual of the Catalan numbers and
their higher genus extensions}
\label{sect:mirror}

The \textbf{Catalan numbers} appear in many 
different
places of mathematics and physics, often quite
unexpectedly. The \emph{Wikipedia} lists
some of the mathematical interpretations. 
The appearance in string theory \cite{OSV} is
surprising.
Here let us use the following definition:
\begin{equation}
\label{eq:parenthesis}
C_m = {\text{the number of ways to 
 place $2m$ pairs of parentheses in a legal manner}}.
\end{equation}
A \emph{legal} manner means the usual way we 
stack them together. 
If we have one pair, then $C_1=1$,
because  $(\;)$ is legal, while  
$) ($ is not.
For $m=2$, we have  $((\;))$ and $(\;)(\;)$, hence
$C_2=2$. Similarly,
$C_3 = 5$ because there are  five legal
combinations:
$$
(((\;))), ((\;))(\;),((\;)(\;)),(\;)((\;)),(\;)(\;)(\;).
$$
This way of exhaustive listing 
becomes harder and harder
as $m$ grows. We need a better mechanism to
find the value, and also a general 
\emph{closed} formula,
if at all possible. Indeed, we have the \emph{Catalan
recursion equation}
\begin{equation}
\label{eq:Catalan recursion}
C_m = \sum_{a+b=m-1} C_aC_b,
\end{equation}
and  a closed formula
\begin{equation}
\label{eq:Catalan formula}
C_m = \frac{1}{m+1}\binom{2m}{m}.
\end{equation}
Although our definition (\ref{eq:parenthesis})
does not make sense for $m=0$, 
the closed formula  (\ref{eq:Catalan formula})
tells us that $C_0=1$, and the recursion 
(\ref{eq:Catalan recursion}) works only if we 
define $C_0=1$.
We will give a proof of these formulas 
later.

Being a ubiquitous object, the Catalan numbers
have many different generalizations. What we
are interested here is not those kind of 
generalized Catalan numbers. We want to 
define \emph{higher-genus} Catalan numbers.
They are necessary if we ask the following question:

\begin{quest}
What is the mirror symmetric dual object of the
Catalan numbers?
\end{quest}

The mirror symmetry was  conceived
in modern theoretical physics as a duality between 
two  different
Calabi-Yau spaces of three complex dimensions. 
According to this idea, the universe consists of
the visible
$3$-dimensional spatial component, $1$-dimensional
time component, and an invisible $6$-dimensional
component. 
The invisible 
component of the universe is
considered as a complex $3$-dimensional 
Calabi-Yau space, and  the quantum nature of
the universe, manifested in quantum interactions
of elementary particles
and black holes, is believed to be
hidden in the geometric structure of 
this invisible manifold. The surprising discovery is 
that the same physical properties can be
obtained from two different settings: 
a Calabi-Yau space $X$ with its K\"ahler structure,
or another Calabi-Yau space $Y$ with its
complex structure. The duality between these 
two sets of data is the mirror symmetry.

The phrase,
``having the same quantum nature of the universe,''
does  not give a mathematical definition. The idea of
Kontsevich  
\cite{K1994}, the \textbf{Homological Mirror 
Symmetry}, is to   define the mirror 
symmetry as the equivalence of 
\emph{derived} categories. Since 
categories do not necessarily require
underlying spaces,  we can 
talk about mirror symmetries among more
general objects. 
For instance, we can 
ask the above question.

What I'd like to 
explain in this paper is that the answer to 
the question is a  
simple \emph{function}
\begin{equation}
\label{eq:x=z+1/z}
x = z + \frac{1}{z}.
\end{equation}
It is quite radical: the mirror symmetry holds
between the Catalan numbers and a function
like (\ref{eq:x=z+1/z})!

If we naively understand the homological mirror
symmetry as the derived equivalence
between symplectic geometry (the A-model side)
 and holomorphic complex geometry 
 (the B-model side), then
 it is easy to guess that (\ref{eq:x=z+1/z}) 
 should define a  B-model. According to 
Ballard \cite{B}, the mirror symmetric 
partner to this function is the projective
line $\bP^1$, together with its standard
K\"ahler structure. The higher-genus 
Catalan numbers we are going to define 
below are associated with the 
K\"ahler geometry of $\bP^1$. 
Their mirror symmetric partners are the
symmetric differential forms that
the Eynard-Orantin theory defines on the
Riemann surface of the function
$x=z+\frac{1}{z}$.

It is more convenient to give a different
definition of 
the Catalan numbers that makes the 
higher-genus 
extension more straightforward. Consider
a graph $\Gam$ drawn on a sphere 
$S^2$ that has only 
one vertex. Since every edge coming out
from this vertex has to come back, the vertex
has an even degree, say $2m$. This means
$2m$ \textbf{half-edges} are incident to the
unique vertex.
Let us place an outgoing arrow to one of 
the half-edges near at the  vertex
(see Figure~\ref{fig:graph01}).
Since $\Gam$ is drawn on $S^2$,
the large loop of the left  of 
Figure~\ref{fig:graph01} can be placed
as in the right graph. These are the same
graph on the sphere.

\begin{figure}[htb]
\centerline{
\epsfig{file=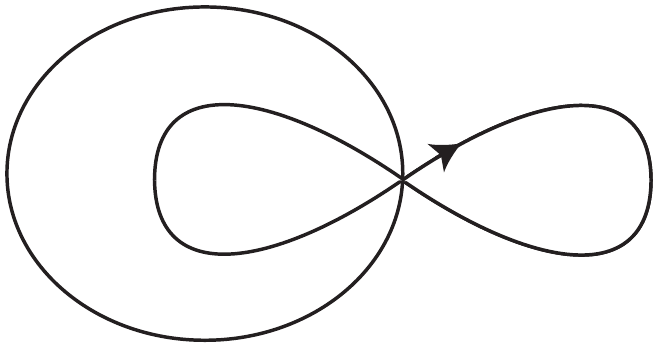, width=1.5in}
\qquad\qquad
\epsfig{file=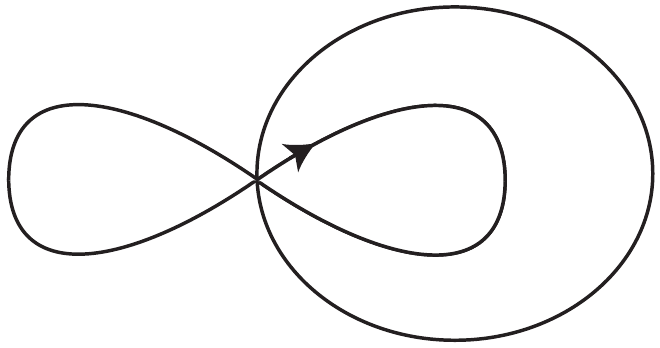, width=1.5in}}
\caption{Two ways
of representing the same
 arrowed graph on $S^2$ with one 
vertex. This graph corresponds to $(((\;)))$.
}
\label{fig:graph01}
\end{figure}

\begin{lem}
The number of arrowed graphs on 
$S^2$ with one vertex
of degree $2m$ is equal to the Catalan number
$C_{m}$.
\end{lem}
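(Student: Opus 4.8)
The plan is to set up a bijection between arrowed graphs on $S^2$ with one vertex of degree $2m$ and some combinatorial object counted by $C_m$. The cleanest target is the set of balanced strings of $2m$ parentheses, so I would aim for a direct bijection with those; alternatively, I would show the count satisfies the Catalan recursion (\ref{eq:Catalan recursion}) with the right initial value. I will carry out the recursion approach, as it seems most robust. First I would fix notation: a graph $\Gam$ on $S^2$ with one vertex $v$ of degree $2m$ is, up to homeomorphism of the sphere, the data of a cyclic ordering (the ribbon structure) of the $2m$ half-edges at $v$ together with a fixed-point-free involution on these half-edges pairing each one with its partner to form an edge; the arrow distinguishes one half-edge, thereby breaking the cyclic symmetry and turning the cyclic order into a genuine linear order $h_1, h_2, \dots, h_{2m}$ starting from the arrowed half-edge. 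So the objects to be counted are exactly the fixed-point-free involutions $\sigma$ on $\{1, 2, \dots, 2m\}$ such that the associated chord diagram on $2m$ points on a circle is \emph{non-crossing} — the non-crossing condition is forced by planarity, since on $S^2$ the complement of the embedded one-vertex graph is a disjoint union of disks and two edges that ``interleave'' around $v$ cannot be realized without a crossing (equivalently, without raising the genus).

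The key steps, in order, are: (1) justify the dictionary above — an arrowed one-vertex ribbon graph on $S^2$ is the same as a non-crossing perfect matching (chord diagram) on $2m$ linearly ordered points; here I would use the standard fact that genus $0$ for a one-vertex ribbon graph is equivalent to the chord diagram being non-crossing, which follows from the Euler characteristic count $V - E + F = 2$, i.e. $1 - m + F = 2$, so planarity forces the matching to have exactly $m+1$ faces, and a short induction shows this happens precisely when no two chords cross. (2) Given a non-crossing matching on $h_1, \dots, h_{2m}$, look at the chord through $h_1$: it connects $h_1$ to some $h_{2a+2}$, splitting the remaining points into an ``inside'' block $\{h_2, \dots, h_{2a+1}\}$ of size $2a$ and an ``outside'' block $\{h_{2a+3}, \dots, h_{2m}\}$ of size $2b$ with $a + b = m - 1$; the non-crossing condition means the matching restricts to a non-crossing matching on each block independently. (3) This gives the bijection with $\bigsqcup_{a+b=m-1}(\text{non-crossing matchings on }2a)\times(\text{non-crossing matchings on }2b)$, hence the count $N_m$ of such graphs satisfies $N_m = \sum_{a+b=m-1} N_a N_b$, which is precisely (\ref{eq:Catalan recursion}); with $N_0 = 1$ (the empty graph, or the unique one-vertex graph of degree $0$), induction gives $N_m = C_m$. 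Alternatively, at step (2)–(3) one can read off a parenthesization directly by declaring the arrowed half-edge to open a parenthesis and matching it to its partner as the closing one, recursively.

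The main obstacle I expect is step (1): making precise and rigorous the claim that ``arrowed one-vertex graph on $S^2$'' $=$ ``non-crossing chord diagram,'' i.e. that the planar (genus $0$) embeddings are classified exactly by non-crossing matchings and that no information is lost or double-counted (in particular that the arrow kills precisely the cyclic symmetry so the count is over linearly ordered, not cyclically ordered, configurations, and that two such graphs are equal on $S^2$ iff the underlying arrowed chord diagrams coincide — this is what Figure~\ref{fig:graph01} illustrates). Once this dictionary is nailed down, steps (2) and (3) are the familiar ``first-return decomposition'' and are routine.
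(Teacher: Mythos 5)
Your proposal is correct, but it takes a different route from the paper. The paper's proof is a one-paragraph direct bijection: starting at the arrowed half-edge and scanning the half-edges counterclockwise, one opens a parenthesis `$($' each time a loop begins and closes it with `$)$' when that loop is completed, so the nesting of loops forced by planarity translates verbatim into a legal nesting of $m$ pairs of parentheses; this is essentially the alternative you mention at the end of steps (2)--(3). You instead encode an arrowed one-vertex graph as a non-crossing perfect matching (justifying ``genus $0$ iff non-crossing'' by the Euler characteristic count $1-m+F=2-2g$), apply the first-return decomposition at the arrowed chord to get $N_m=\sum_{a+b=m-1}N_aN_b$ with $N_0=1$, and conclude by induction. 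Two remarks on the comparison. First, your step (1) makes explicit the planarity-equals-non-crossing dictionary that the paper's bijection uses only implicitly, which is a genuine gain in rigor. Second, be careful about the paper's logical order: to conclude $N_m=C_m$ from the recursion you need to know that $C_m$, \emph{defined} as the parenthesization count (\ref{eq:parenthesis}), itself satisfies (\ref{eq:Catalan recursion}) with $C_0=1$; in the paper that recursion is proved \emph{later}, via this very lemma together with (\ref{eq:Cm}), so quoting (\ref{eq:Catalan recursion}) as given would be circular in context. This is easily repaired — the same first-return decomposition applied to a legal string (pair the initial `$($' with its matching `$)$') proves the recursion for parenthesizations directly — but you should say so; alternatively, your direct-bijection variant avoids the issue entirely and recovers the paper's argument.
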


\begin{proof}
We assign to each edge forming a loop a
pair of parentheses. Their placement
is nested according to the graph. The
starting parenthesis `$($' corresponds to 
the unique arrowed half-edge. We then
examine all half-edges by the counter clock-wise
order. When a new loop is started, we open
a parenthesis `$($'. When it is closed to form
a loop, we complete a pair of parentheses 
by placing a `$)$'. In this way we have a
bijective correspondence between graphs on
$S^2$ with one vertex of degree $2m$ and the nested 
pairs of $2m$ parentheses. 
\end{proof}

Now a higher-genus generalization is easy.
A \textbf{cellular graph} of type $(g,n)$
is the one-skeleton of a cell-decomposition of 
a  connected, closed, oriented surface
of genus $g$ with $n$ 
$0$-cells labeled by the index set 
$[n]=\{1,2,\dots,n\}$. Two cellular graphs
are identified if an orientation-preserving 
homeomorphism  of a surface into another
surface maps one cellular graph to another,
honoring the labeling of each vertex.
Let $D_{g,n}(\mu_1,\dots, \mu_n)$ denote
the number of connected cellular graphs $\Gam$ of
type $(g,n)$ with $n$ labeled vertices
of degrees $(\mu_1,\dots,\mu_n)$, 
counted with the weight 
$1/|\Aut(\Gam)|$. It is generally a rational number.
The orientation of the surface induces a 
cyclic order of incident half-edges at each 
vertex of a cellular graph $\Gam$. Since 
$\Aut(\Gam)$ fixes each vertex, 
it is a subgroup of the Abelian group
$\prod_{i=1} ^n \bZ\big/\mu_i \bZ$ that rotates
each vertex and the incident half-edges.
Therefore, 
\begin{equation}
\label{eq:Catalan gn}
C_{g,n}(\mu_1,\dots,\mu_n)
= \mu_1\cdots\mu_n D_{g,n}(\mu_1,\dots,\mu_n)
\end{equation}
is always an integer. 
The cellular graphs counted by
(\ref{eq:Catalan gn}) are connected 
 graphs of genus $g$ with $n$ vertices of degrees
$(\mu_1,\dots,\mu_n)$, and at the $j$-th vertex 
for every $j=1,\dots,n$, an 
arrow is placed on one of the incident
$\mu_j$ half-edges (see Figure~\ref{fig:cellulargraph}).
The placement of $n$ arrows corresponds to the
factors $\mu_1\cdots\mu_n$ on the right-hand side.
We call this integer the 
\textbf{Catalan number} of type $(g,n)$.
The reason for this naming comes from the
fact that $C_{0,1}(2m) = C_m$, and the
following theorem.

\begin{figure}[htb]
\centerline{
\epsfig{file=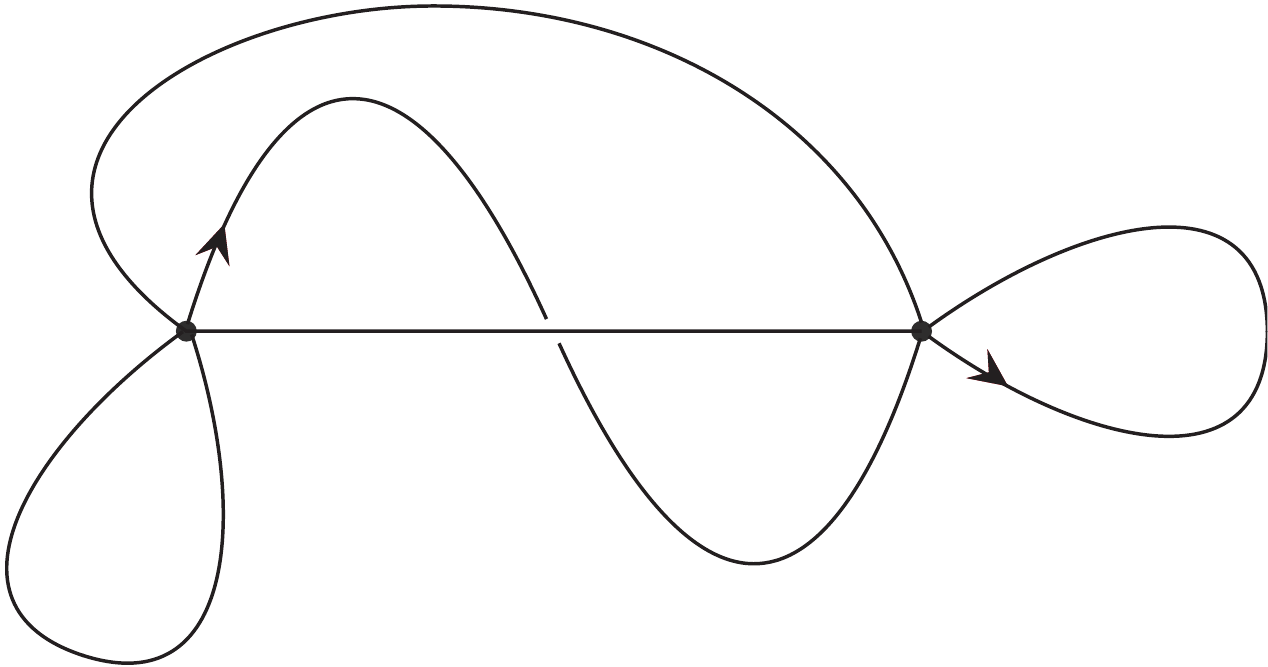, width=2in}}
\caption{A cellular graph of type $(1,2)$.}
\label{fig:cellulargraph}
\end{figure}

\begin{thm}
The generalized Catalan numbers of 
{\rm(\ref{eq:Catalan gn})}
satisfy the following equation.
\begin{multline}
\label{eq:Catalan gn recursion}
C_{g,n}(\mu_1,\dots,\mu_n) 
=\sum_{j=2}^n \mu_j 
C_{g,n-1}(\mu_1+\mu_j-2,\mu_2,\dots,
\widehat{\mu_j},\dots,\mu_n)
\\
+
\sum_{\a+\b = \mu_1-2}
\left[
C_{g-1,n+1}(\a,\b,\mu_2,\cdots,\mu_n)+
\sum_{\substack{g_1+g_2=g\\
I\sqcup J=\{2,\dots,n\}}}
C_{g_1,|I|+1}(\a,\mu_I)C_{g_2,|J|+1}(\b,\mu_J)
\right],
\end{multline}
where $\mu_I=(\mu_i)_{i\in I}$ for 
an index set $I\subset[n]$,
$|I|$ denotes the cardinality of $I$, and
the third sum in the formula is for 
all  partitions of $g$ and 
set partitions of $\{2,\dots,n\}$. 
\end{thm}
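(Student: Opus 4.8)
The plan is to prove the recursion by a direct combinatorial analysis of what happens to an arrowed cellular graph $\Gamma$ of type $(g,n)$ when one removes the arrowed half-edge at the first vertex. So fix such a graph with vertices of degrees $(\mu_1,\dots,\mu_n)$, and let $e$ be the edge carrying the distinguished arrow at vertex $1$; thus $e$ is incident to vertex $1$ at the arrowed half-edge. The key case distinction is on the other endpoint of $e$: either $e$ connects vertex $1$ to another vertex $v_j$ ($j\in\{2,\dots,n\}$), or $e$ connects vertex $1$ to itself (a loop at vertex $1$). These two possibilities will produce the first sum and the bracketed sum, respectively, on the right-hand side of (\ref{eq:Catalan gn recursion}).

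First I would treat the case where $e$ joins vertex $1$ to vertex $v_j$. Contract $e$: this merges the two vertices into a single vertex whose degree is $\mu_1+\mu_j-2$ (we lose the two half-edges of $e$), and the cyclic orders at the two vertices splice together along $e$ in the unique way dictated by the orientation. The result is a cellular graph of type $(g,n-1)$ with the $j$-th vertex removed from the list. To recover the bookkeeping: the new merged vertex inherits the arrow at vertex $1$ in a canonical way (the half-edge immediately following the deleted one in cyclic order), but we must remember which of the $\mu_j$ half-edges at $v_j$ was the one glued to $e$ — that is an extra choice of one half-edge among $\mu_j$, accounting for the factor $\mu_j$. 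Conversely, given an arrowed graph counted by $C_{g,n-1}(\mu_1+\mu_j-2,\mu_2,\dots,\widehat{\mu_j},\dots,\mu_n)$ together with a marked half-edge at its first vertex (which splits the cyclic order there into two arcs of the right sizes), we reconstruct $\Gamma$ and $e$ uniquely; this gives the bijection and hence the first sum.

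Next, the case where $e$ is a loop at vertex $1$. Cutting $e$ replaces vertex $1$ by a vertex with two new "ends": the cyclic order of the $\mu_1$ half-edges at vertex $1$, with the arrowed one and its loop-partner removed, breaks into two linear arcs, of sizes $\alpha$ and $\beta$ with $\alpha+\beta=\mu_1-2$; on each arc we place a new vertex (and a canonical new arrow, say on the first half-edge of the arc). Topologically, cutting a loop either decreases the genus by one while keeping the surface connected — giving a graph of type $(g-1,n+1)$ with the two new vertices of degrees $\alpha,\beta$, contributing $C_{g-1,n+1}(\alpha,\beta,\mu_2,\dots,\mu_n)$ — or disconnects the surface into two pieces, of genera $g_1,g_2$ with $g_1+g_2=g$, with the remaining labeled vertices partitioned as $I\sqcup J=\{2,\dots,n\}$, the $\alpha$-end landing on the genus-$g_1$ piece and the $\beta$-end on the genus-$g_2$ piece, contributing $C_{g_1,|I|+1}(\alpha,\mu_I)\,C_{g_2,|J|+1}(\beta,\mu_J)$. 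Summing over $\alpha+\beta=\mu_1-2$ and over all such splittings yields the bracket. The main obstacle, and the point needing the most care, is the genus/connectivity accounting when cutting the loop: one must verify via an Euler-characteristic count that cutting a non-separating loop drops the genus by exactly $1$ and that a separating loop splits the genus additively, and one must check that the two ways of naming the new ends ($\alpha$ on one side versus $\beta$ on the other) are correctly handled by the ordered pair $(\alpha,\beta)$ and the ordered partition $(I,J)$, so that no symmetry factor is dropped or double-counted. A parallel subtlety is that $\mathrm{Aut}(\Gamma)$ fixes every vertex, so placing the arrows rigidifies the graph; one should note that all the bijections above are between genuine arrowed graphs (integer-weighted) rather than weighted cellular graphs, which is exactly why (\ref{eq:Catalan gn}) was set up with the factor $\mu_1\cdots\mu_n$, and that the factor $\mu_1$ sitting in front of everything on the left is consumed precisely by the choice of the arrowed half-edge at vertex $1$ that we removed at the start.
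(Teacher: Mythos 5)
Your proof follows essentially the same route as the paper's own argument: split on whether the arrowed half-edge at the first vertex extends to an edge joining $p_1$ to some $p_j$ (contract it) or is a loop at $p_1$ (cut it and separate the pinch point into two new arrowed vertices of degrees $\a$ and $\b$), with the genus/connectivity bookkeeping producing the three groups of terms in (\ref{eq:Catalan gn recursion}). One small bookkeeping remark: since the splice of the two cyclic orders already determines where the contracted edge re-attaches at $p_j$, the factor $\mu_j$ is most cleanly accounted for by the $\mu_j$ possible placements of the arrow at $p_j$ that is forgotten under contraction (so the reconstructed graph is again fully arrowed), rather than by remembering which half-edge of $p_j$ was glued to $e$; the count is the same either way.
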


\begin{proof}
Consider an arrowed cellular graph $\Gam$
counted
by the left-hand side of 
(\ref{eq:Catalan gn recursion}), and 
let
$\{p_1,\dots,p_n\}$ denote the set of 
labeled vertices of $\Gam$.
We look at the half-edge incident to $p_1$ that
carries an arrow. 

\begin{case}
The arrowed half-edge  extends to an edge $E$ that
connects $p_1$ and $p_j$ for some $j>1$.
\end{case}

We shrink the edge $E$ and join the two
vertices $p_1$ and $p_j$ together. By this process
we create a new vertex of degree $\mu_1+\mu_j-2$.
To make the counting bijective, we need to be able
to go back from the shrunken graph to the original,
provided that we know $\mu_1$ and $\mu_j$.
Thus we place an arrow to the half-edge 
next to $E$ around $p_1$ with respect to the
counter-clockwise cyclic order that comes from 
the orientation of the surface. In this process
we have $\mu_j$ different arrowed graphs 
that produce the same result, because we must
remove the arrow placed around the vertex $p_j$
in the original graph. This gives the 
right-hand side of the first line of 
(\ref{eq:Catalan gn recursion}). See 
Figure~\ref{fig:case1}.

\begin{figure}[htb]
\centerline{
\epsfig{file=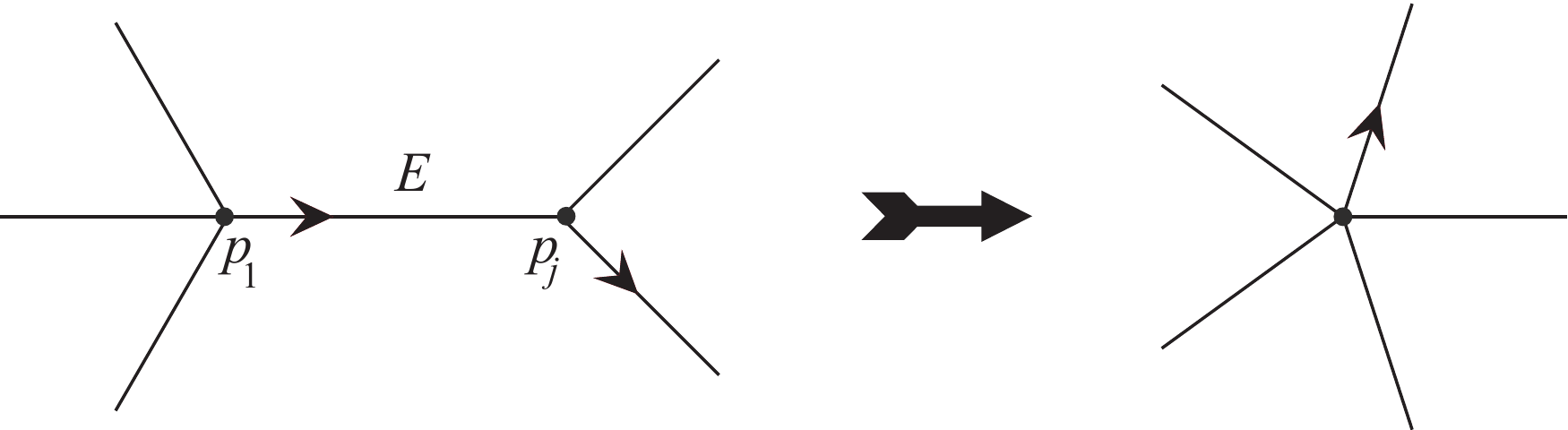, width=2.5in}}
\caption{The process of shrinking the arrowed edge
$E$ that connects vertices $p_1$ and $p_j$, $j>1$.}
\label{fig:case1}
\end{figure}

\begin{case}
The arrowed half-edge at $p_1$ is actually a loop
$E$ that goes out and comes back to $p_1$.
\end{case}

The process we apply is again shrinking the loop $E$.
The loop $E$ separates all other half-edges into 
two groups, one consisting of $\a$ of them placed on
one side of the loop, and the other consisting of 
$\b$ half-edges placed on the other side. It can 
happen that $\a=0$ or $\b=0$. 
Shrinking a loop on a surface causes pinching. 
Instead of creating a pinched (i.e., singular) surface, 
we separate the double point into two new vertices
of degrees $\a$ and $\b$. 
Here again we need to remember the position of the 
loop $E$. Thus we place an arrow to the half-edge
next to the loop in each group.
See Figure~\ref{fig:case2}.

\begin{figure}[htb]
\centerline{
\epsfig{file=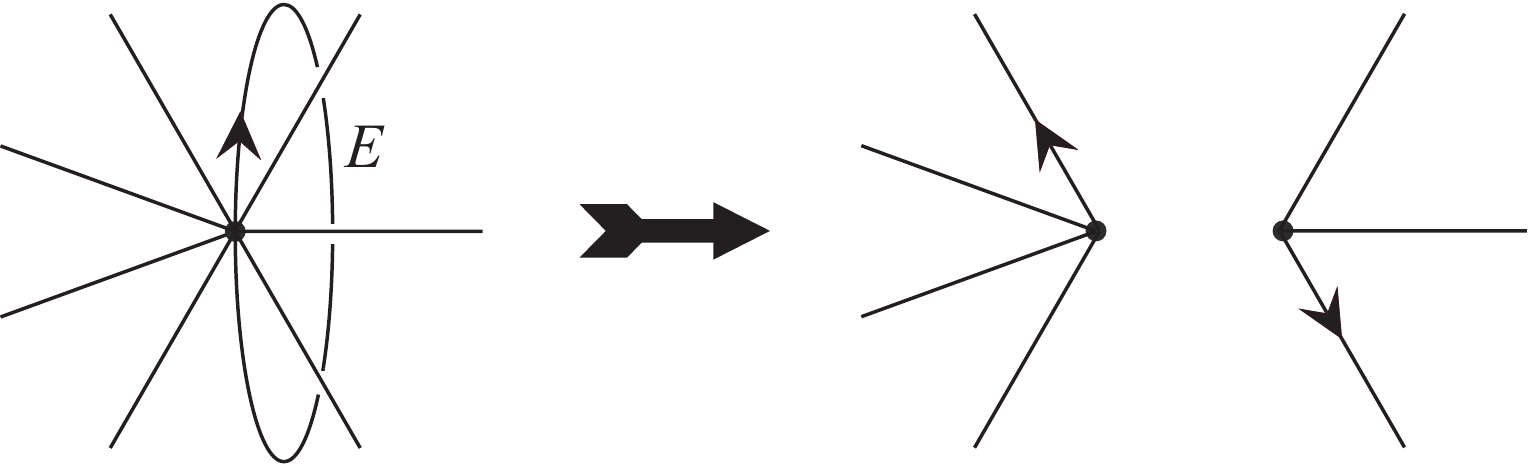, width=2.5in}}
\caption{The process of shrinking the arrowed loop
$E$ that is attached to  $p_1$.}
\label{fig:case2}
\end{figure}

After the pinching and separating the double 
point, the original surface of genus $g$ with 
$n$ vertices $\{p_1,\dots,p_n\}$ may change its
topology. It may have genus $g-1$, or it splits into
two pieces of genus $g_1$ and $g_2$. 
The second line of (\ref{eq:Catalan gn recursion})
records all such possibilities.
 This completes the proof.
\end{proof}

\begin{rem}
For $(g,n) = (0,1)$, the above formula reduces to 
\begin{equation}
\label{eq:Cm}
C_{0,1}(\mu_1) =
\sum_{\a+\b=\mu_1-2} C_{0,1}(\a) C_{0,1}(\b),
\end{equation}
which proves (\ref{eq:Catalan recursion}) since
$C_{0,1}(2m) = C_m$.
\end{rem}

Note that we \emph{define} 
$C_{0,1}(0)=1$. 
Only for the $(g,n)=(0,1)$ case
 this irregularity   of non-zero
value happens  for $\mu_1=0$. This is
 because a degree $0$ single vertex is
\emph{connected}, and gives a cell-decomposition
of $S^2$. We can imagine that a single vertex on
$S^2$ has
an infinite cyclic group as its automorphism, 
so that $C_{0,1}(0)=1$ is consistent. 
In all other cases,
if one of the vertices has degree $0$, then the 
Catalan number $C_{g,n}$ 
is simply $0$ because of the
definition
(\ref{eq:Catalan gn}).

Following Kodama-Pierce \cite{KP}, 
we introduce the generating function of the 
Catalan numbers by
\begin{equation}
\label{eq:Catalan z}
z=z(x) = \sum_{m=0} ^\infty 
C_m\frac{1}{x^{2m+1}}.
\end{equation}
Then by the quadratic recursion (\ref{eq:Cm}),
we find that the inverse function of $z(x)$
that vanishes at $x=\infty$ is given by
$$
x=z+\frac{1}{z},
$$
which is exactly
(\ref{eq:x=z+1/z}). 
We remark  that solving 
the above equation as a quadratic
equation for $z$ yields
$$
z = \frac{x-\sqrt{x^2-4}}{2}
=\frac{x}{2}\left(1-\sqrt{1-\left(\frac{2}{x}
\right)^2}
\right)
=
\frac{x}{2}
\sum_{m=1}^\infty (-1)^{m-1} \binom{\half}{m}
\left(\frac{2}{x}
\right)^{2m},
$$
from which the closed formula
 (\ref{eq:Catalan formula}) follows.

\section{The Laplace transform 
of the generalized Catalan numbers}
\label{sect:Laplace}

Let us compute the Laplace transform 
of the generalized Catalan numbers. 
Why are we interested in the Laplace transform?
The answer  becomes clear only after 
we examine the result of  computation.

So we define the discrete Laplace transform
\begin{equation}
\label{eq:FgnC}
F_{g,n}^C(t_1,\dots, t_n)
=\sum_{(\mu_1,\dots,\mu_n)\in\bZ_+^n}
D_{g,n}(\mu_1,\dots,\mu_n)\;e^{-\la w,\mu\ra}
\end{equation}
for $(g,n)$ subject to $2g-2+n>0$,
where the Laplace  dual coordinates 
$w=(w_1,\dots,w_n)$ of
$(\mu_1,\dots,\mu_n)$
is
related to the function coordinate 
$t=(t_1,\dots,t_n)$  by
\begin{equation}
\label{eq:wxzt}
e^{w_i} = x_i = z_i+\frac{1}{z_i} =
 \frac{t_i+1}{t_i-1}
+\frac{t_i-1}{t_i+1}, \qquad i=1,2,\dots,n,
\end{equation}
and $\la w,\mu\ra = w_1\mu_1+\cdots+w_n\mu_n$.
The \textbf{Eynard-Orantin differential form}
of type $(g,n)$ is given by
\begin{equation}
\label{eq:Wgn}
\begin{aligned}
W_{g,n}^C(t_1,\dots,t_n) &= 
d_1\cdots d_n F_{g,n}^C(t_1,\dots,t_n)\\
&=(-1)^n \sum_{(\mu_1,\dots,\mu_n)\in\bZ_+^n}
C_{g,n}(\mu_1,\dots,\mu_n)\;e^{-\la w,\mu\ra}
dw_1\cdots dw_n
\end{aligned}.
\end{equation}

Due to the irregularity that a single point
is a connected cellular graph of type $(0,1)$, 
we \emph{define}
\begin{equation}
\label{eq:W01}
W_{0,1}^C(t) = -\sum_{\mu=0}^\infty
C_{0,1}(\mu) \frac{1}{x^{\mu}} \cdot \frac{dx}{x}
=-z(x)dx,
\end{equation}
including the $\mu=0$ term. Since $dF_{0,1}^C
=W_{0,1}^C$, we find
\begin{equation}
\label{eq:F01}
F_{0,1}^C(t) = -\frac{1}{2}z^2 + \log z +\const.
\end{equation}
Using the value of Kodama and Pierce
\cite{KP}  for
$D_{0,2}(\mu_1,\mu_2)$,
we calculate (see \cite{DMSS})
\begin{equation}
\label{eq:F02}
F_{0,2}^C(t_1,t_2) = -\log(1-z_1z_2),
\end{equation}
and hence 
\begin{equation}
\label{eq:W02}
W_{0,2}^C(t_1,t_2) = \frac{dt_1\cdot dt_2}
{(t_1-t_2)^2}-\frac{dx_1\cdot dx_2}
{(x_1-x_2)^2} =
\frac{dt_1\cdot dt_2}
{(t_1+t_2)^2}.
\end{equation}
The $2$-form $\frac{dx_1\cdot dx_2}
{(x_1-x_2)^2}$ is the local expression of the
symmetric 
second derivative of the logarithm of
Riemann's \textbf{prime form} on a
Riemann surface. Thus $W_{0,2}^C$ is the 
difference of this quantity between the 
Riemann surface of $x=z+\frac{1}{z}$
and the $x$-coordinate plane. 
This relation is true for all known 
examples, and hence $W_{0,2}$ is
\emph{defined} as the second log
derivative of the prime form of the spectral curve
in \cite{EO1}. It is important
to note that in our definition,
$W_{0,2}^C(t_1,t_2)$ is regular at the diagonal
$t_1=t_2$.

Note that the function $z(x)$ is absolutely
convergent for $|x|>2$. Since its inverse
function is a rational function
given by (\ref{eq:x=z+1/z}), 
the \emph{Riemann surface} of the inverse function,
i.e., the maximal domain of holomorphy of $x(z)$, 
is $\bP^1\setminus \{0,\infty\}$. At $z=\pm 1$
the function $x=z+\frac{1}{z}$ is branched, and
this is why $z(x)$ has the radius of convergence
$2$, measured from $\infty$. The coordinate
change 
$$
z=\frac{t+1}{t-1}
$$
brings the branch points to $0$ and $\infty$.

\begin{thm}[\cite{MZhou}]
\label{thm:FC recursion}
The Laplace transform $F^C_{g,n}(t_{[n]})$ 
satisfies the following
differential recursion equation
for every $(g,n)$ subject to $2g-2+n>0$.
\begin{multline}
\label{eq:FC recursion}
\frac{\partial}{\partial t_1}F^C_{g,n}(t_{[n]})
\\
=
-\frac{1}{16}
\sum_{j=2} ^n
\left[\frac{t_j}{t_1^2-t_j^2}
\left(
\frac{(t_1^2-1)^3}{t_1^2}\frac{\partial}{\partial t_1}
F^C_{g,n-1}(t_{[\hat{j}]})
-
\frac{(t_j^2-1)^3}{t_j^2}\frac{\partial}{\partial t_j}
F^C_{g,n-1}(t_{[\hat{1}]})
\right)
\right]
\\
-\frac{1}{16}
\sum_{j=2} ^n
\frac{(t_1^2-1)^2}{t_1^2}\frac{\partial}{\partial t_1}
F^C_{g,n-1}(t_{[\hat{j}]})
\\
-
\frac{1}{32}\;\frac{(t_1^2-1)^3}{t_1^2}
\left.
\left[
\frac{\partial^2}{\partial u_1\partial u_2}
F^C_{g-1,n+1}(u_1,u_2,t_2, t_3,\dots,t_n)
\right]
\right|_{u_1=u_2=t_1}
\\
-
\frac{1}{32}\;\frac{(t_1^2-1)^3}{t_1^2}
\sum_{\substack{g_1+g_2=g\\
I\sqcup J=\{2,3,\dots,n\}}}
^{\rm{stable}}
\frac{\partial}{\partial t_1}
F^C_{g_1,|I|+1}(t_1,t_I)
\frac{\partial}{\partial t_1}
F^C_{g_2,|J|+1}(t_1,t_J).
\end{multline}
Here we use the index convention 
$[n]=\{1,2,\dots,n\}$ and $[\hat{j}] = 
\{1,2,\dots,\hat{j},\dots, n\}$.
The final sum is for partitions
subject to the stability condition
$2g_1-1+|I|>0$ and $2g_2-1+|J|>0$.
\end{thm}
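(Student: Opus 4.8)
The plan is to derive the differential recursion for $F^C_{g,n}$ from the combinatorial recursion for $C_{g,n}$ in Theorem~2.3 by applying the discrete Laplace transform term by term. First I would rewrite the combinatorial recursion so that every summand has a shape amenable to Laplace transform: the key point is that a factor $\mu_j$ multiplying a Catalan number corresponds, after multiplying by $e^{-\langle w,\mu\rangle}$ and summing, to $-\partial/\partial w_j$ acting on the appropriate $F^C$, while the \emph{merging} operations (the argument $\mu_1+\mu_j-2$ in the first line, and $\alpha+\beta=\mu_1-2$ in the second) produce convolution sums. So the strategy is: multiply (\ref{eq:Catalan gn recursion}) by $e^{-\langle w,\mu\rangle}$, sum over $(\mu_1,\dots,\mu_n)\in\bZ_+^n$, and interpret each of the four groups of terms as a differential/algebraic operation on lower $F^C$'s.

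The main technical device is the Laplace transform of the two elementary kernels. For the first-line terms, the sum $\sum_{\mu_1,\mu_j} e^{-w_1\mu_1-w_j\mu_j}(\text{something in }\mu_1+\mu_j-2)$ needs to be re-expressed in terms of a single new variable $\nu=\mu_1+\mu_j-2$; the residual freedom in splitting $\nu$ back into $\mu_1,\mu_j$ produces a geometric-type sum whose closed form, after the change of variables (\ref{eq:wxzt}) from $w_i$ to $t_i$, yields the rational kernel $\frac{t_j}{t_1^2-t_j^2}$ together with the differential operators $\frac{(t_1^2-1)^3}{t_1^2}\frac{\partial}{\partial t_1}$, etc. Concretely, since $e^{w_i}=x_i=z_i+z_i^{-1}$ and $z_i=\frac{t_i+1}{t_i-1}$, one has $x_i=\frac{2(t_i^2+1)}{t_i^2-1}$ and hence $\frac{\partial w_i}{\partial t_i}$, $\frac{dx_i}{dt_i}$ are explicit rational functions; the factor $\frac{(t_1^2-1)^3}{t_1^2}$ and the normalizations $\tfrac{1}{16}$, $\tfrac{1}{32}$ are exactly the artifacts of converting $\sum_{\alpha+\beta=\mu_1-2}$-type kernels through this substitution (this is the same mechanism as in \cite{DMSS}). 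For the second-line terms, the convolution $\sum_{\alpha+\beta=\mu_1-2}$ against $C_{g-1,n+1}(\alpha,\beta,\dots)$ becomes $\left.\frac{\partial^2}{\partial u_1\partial u_2}F^C_{g-1,n+1}(u_1,u_2,\dots)\right|_{u_1=u_2=t_1}$ after passing from $\alpha,\beta$ to dual variables and restricting to the diagonal, and likewise the split sum becomes the product of two $\partial_{t_1}F^C$'s, with the stability condition $2g_i-1+|I|>0$ inherited from the requirement that the pieces be honest cellular graphs (recall the $(0,1)$ irregularity is handled by the separate definition of $W_{0,1}^C$, $F_{0,1}^C$, so those terms must be excluded from the recursion and accounted for, if present, via boundary contributions).

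The hard part will be the bookkeeping of the elementary-kernel Laplace transforms: computing $\sum_{\mu_1\ge 1,\mu_j\ge 1} x_1^{-\mu_1}x_j^{-\mu_j}$ restricted to level sets of $\mu_1+\mu_j-2$, and similarly the splitting kernel for $\alpha+\beta=\mu_1-2$, and then verifying that after the substitution $z_i=\frac{t_i+1}{t_i-1}$ all of these collapse to the clean rational expressions with the precise constants $\tfrac{1}{16}$ and $\tfrac{1}{32}$ shown. This amounts to evaluating a handful of generating-function identities and partial-fraction decompositions; it is routine in principle but error-prone, and getting the constants and the exact power $(t_i^2-1)^3/t_i^2$ right is where the argument must be done carefully. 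A secondary subtlety is justifying the interchange of summation and differentiation — i.e., convergence of the series (\ref{eq:FgnC}) in a neighborhood determined by $|x_i|>2$, equivalently a punctured neighborhood of $t_i=0$ — but for $2g-2+n>0$ the counts $D_{g,n}$ grow only polynomially on each level set, so absolute convergence and termwise differentiation are legitimate. Once these kernel computations are in hand, assembling them against the four groups of terms in (\ref{eq:Catalan gn recursion}) reproduces (\ref{eq:FC recursion}) directly, so the proof reduces to the combinatorial recursion already established plus this transform dictionary.
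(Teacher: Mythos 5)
Your overall route is the same as the paper's: Laplace-transform the combinatorial recursion (\ref{eq:Catalan gn recursion}) term by term, convert the $\mu_j$-factors and convolution kernels into differential operators and rational kernels via (\ref{eq:wxzt}), and assemble the result into (\ref{eq:FC recursion}). However, there is one concrete gap, and it is exactly the point the paper singles out: the unstable terms $(g,n)=(0,1)$ and $(0,2)$ appearing inside the splitting sum of (\ref{eq:Catalan gn recursion}) cannot be ``excluded from the recursion and accounted for, if present, via boundary contributions.'' They must be \emph{substituted} with their explicit values (\ref{eq:F01}) and (\ref{eq:F02}). The reason is structural, not bookkeeping: the terms with $(g_1,I)=(0,\emptyset)$ (and symmetrically $(g_2,J)=(0,\emptyset)$) are of the form $\sum_{\a+\b=\mu_1-2}C_{0,1}(\a)\,C_{g,n}(\b,\mu_2,\dots,\mu_n)$, i.e.\ they contain the \emph{unknown} $F^C_{g,n}$ itself. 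After the transform these must be moved to the left-hand side, and dividing by the resulting coefficient is precisely what produces the overall prefactor: with $z=\frac{t+1}{t-1}$ one has $\frac{1}{z}-z=\frac{-4t}{t^2-1}$ and $\frac{dx}{dt}=\frac{-8t}{(t^2-1)^2}$, so
\begin{equation*}
\left(\frac{1}{z}-z\right)\frac{dx}{dt}=\frac{32\,t^2}{(t^2-1)^3},
\end{equation*}
whose reciprocal is the factor $\frac{1}{32}\frac{(t_1^2-1)^3}{t_1^2}$ in (\ref{eq:FC recursion}). If you simply drop the $(0,1)$ pieces, this factor (and hence the correct constants $\tfrac1{16}$, $\tfrac1{32}$) never appears, no matter how carefully you compute the remaining elementary kernels.

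Relatedly, the stability restriction on the last sum is not ``inherited from the requirement that the pieces be honest cellular graphs'' --- $C_{0,1}$ and $C_{0,2}$ are perfectly good cellular-graph counts and do occur in (\ref{eq:Catalan gn recursion}) --- but is a consequence of the fact that those unstable contributions have been evaluated explicitly and absorbed elsewhere: the $(0,1)$ terms into the prefactor as above, and the $(0,2)$ terms (via (\ref{eq:F02})) into the first group of terms, alongside the transform of the edge-shrinking sum, which is how the mixed kernel $\frac{t_j}{t_1^2-t_j^2}$ acting on both $\partial_{t_1}F^C_{g,n-1}(t_{[\hat j]})$ and $\partial_{t_j}F^C_{g,n-1}(t_{[\hat 1]})$ arises. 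With this correction, and with the kernel computations actually carried out (which you defer, as does the paper, to \cite{MZhou}), your plan does reproduce (\ref{eq:FC recursion}); your convergence remark for $|x_i|>2$ is fine.
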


The proof follows from the Laplace 
transform of (\ref{eq:Catalan gn recursion}).
Since the formula for the 
generalized Catalan numbers contain 
unstable geometries $(g,n) = (0,1)$ and $(0,2)$, 
we need to substitute the values
(\ref{eq:F01}) and (\ref{eq:F02}) in the computation
to derive the recursion in the form of
(\ref{eq:FC recursion}).

Since the form of the equation (\ref{eq:FC recursion})
is identical to \cite[Theorem~5.1]{MP2012}, and 
since
the initial values $F_{1,1}^C$ and 
$F_{0,3}^C$ of \cite{MZhou} agree with 
that of
\cite[(6.1), (6,2)]{MP2012},
the same conclusion of \cite{MP2012} holds.
Therefore, 

\begin{thm}
\label{thm:geometry}
The Laplace transform $F_{g,n}^C(t_1,\dots,t_n)$
in the stable range $2g-2+n>0$
satisfies the following properties.
\begin{itemize}
\item The reciprocity:
$F_{g,n}^C(1/t_1,\dots,1/t_n) =
F_{g,n}^C(t_1,\dots,t_n) $. 

\item The polynomiality:
$F_{g,n}^C(t_1,\dots,t_n)$ is a Laurent polynomial
of degree $3(2g-2+n)$.
\item The highest degree asymptotics as the 
Virasoro condition:
The leading terms of 
$F_{g,n}^C(t_1,\dots,t_n)$ form a homogeneous 
polynomial defined by
\begin{equation}
\label{eq:FCtop}
F_{g,n}^{C\text{-top}}(t_1,\dots,t_n)
=\frac{(-1)^n}{2^{2g-2+n}}
\sum_{\substack{d_1+\cdots+d_n\\=3g-3+n}}
\la \tau_{d_1}\cdots\tau_{d_n}\ra_{g,n}
\prod_{i=1}^n \left[
(2d_i-1)!!\left(\frac{t_i}{2}\right)^{2d_i+1}
\right],
\end{equation}
where $\la \tau_{d_1}\cdots\tau_{d_n}\ra_{g,n}$
is the $\psi$-class intersection numbers of
the Deligne-Mumford moduli stack
$\Mbar_{g,n}$. The recursion
Theorem~\ref{thm:FC recursion} restricts
to the highest degree terms and produces
the DVV formulation \cite{DVV} of the
Witten-Kontsevich theorem
\cite{K1992, W1991}, which
is equivalent to the Virasoro constraint condition
for the intersection numbers on $\Mbar_{g,n}$.
\item The Poinar\'e polynomial:
The principal specialization 
$F_{g,n}^C(t,t,\dots,t)$ is  a polynomial
in 
\begin{equation}
\label{eq:s}
s = \frac{(t+1)^2}{4t}, 
\end{equation}
and coincides with 
the virtual Poincar\'e polynomial of 
$\cM_{g,n}\times \bR_+ ^n$.
\item The Euler characteristic:
In particular, we have
$$
F_{g,n}^C(1,1\dots,1) = (-1)^n \rchi(\cM_{g,n}).
$$
\end{itemize}
\end{thm}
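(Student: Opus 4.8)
The plan is to reduce everything to the differential recursion of Theorem~\ref{thm:FC recursion} together with a finite amount of initial data, and then to match with the results of \cite{MP2012}.

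First I would observe that (\ref{eq:FC recursion}) is a \emph{closed} recursion for the partial order given by $2g-2+n$: on the right-hand side one sees only $F_{g,n-1}^C$, $F_{g-1,n+1}^C$, and products $F_{g_1,|I|+1}^C\,F_{g_2,|J|+1}^C$, each with a strictly smaller value of $2g-2+n$, while the unstable geometries $(0,1)$ and $(0,2)$ have already been eliminated by substituting (\ref{eq:F01}) and (\ref{eq:F02}) into the Laplace transform of (\ref{eq:Catalan gn recursion}). Hence the whole family $\{F_{g,n}^C\}_{2g-2+n>0}$ is determined by the two base cases $F_{0,3}^C$ and $F_{1,1}^C$, which are computed directly from (\ref{eq:Catalan gn recursion}), (\ref{eq:F01}), (\ref{eq:F02}) as in \cite{MZhou}. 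To turn ``integrate (\ref{eq:FC recursion}) in $t_1$'' into an actual algorithm I would first fix the normalization: the integration constant is pinned down by requiring the reciprocity $F_{g,n}^C(\dots,1/t_i,\dots)=F_{g,n}^C(\dots,t_i,\dots)$ together with the vanishing of the degree-$0$ part, which I would check is compatible with, and propagated by, the recursion.

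Next I would compare with \cite{MP2012}: the recursion \cite[Theorem~5.1]{MP2012} for the Laplace transform of the lattice-point count of $\cM_{g,n}$ has literally the same shape --- the same rational coefficients $t_j/(t_1^2-t_j^2)$, $(t_1^2-1)^3/t_1^2$, $(t_1^2-1)^2/t_1^2$ with the same numerical prefactors --- and its base cases coincide with $F_{0,3}^C$ and $F_{1,1}^C$ by the agreement of \cite{MZhou} with \cite[(6.1), (6.2)]{MP2012}. Since the recursion is deterministic, this forces $F_{g,n}^C$ to equal the function studied in \cite{MP2012} for every $(g,n)$ in the stable range, and each bullet is then exactly a statement proved there: the reciprocity and the fact that $F_{g,n}^C$ is a Laurent polynomial of degree $3(2g-2+n)$ are established in \cite{MP2012} by induction on $2g-2+n$ using the recursion; the identification of the principal specialization $F_{g,n}^C(t,\dots,t)$ with a polynomial in $s=(t+1)^2/(4t)$ equal to the virtual Poincar\'e polynomial of $\cM_{g,n}\times\bR_+^n$, and its value $(-1)^n\rchi(\cM_{g,n})$ at $t=1$, are the main results of \cite{MP2012}.

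For the Virasoro/DVV bullet I would argue in the same spirit but directly on (\ref{eq:FC recursion}). Degree counting shows that every term on the right-hand side has degree at most $3(2g-2+n)-1$, matching $\partial F_{g,n}^C/\partial t_1$, and the top-homogeneous parts decouple: $(t_1^2-1)^3/t_1^2\sim t_1^4$, $(t_1^2-1)^2/t_1^2\sim t_1^2$, and in the combination $\frac{t_j}{t_1^2-t_j^2}\bigl(\cdots\bigr)$ the leading behaviour is controlled by the geometric expansion in $t_j/t_1$; substituting the ansatz (\ref{eq:FCtop}) and equating coefficients of $\prod_i (2d_i-1)!!(t_i/2)^{2d_i+1}$ turns (\ref{eq:FC recursion}) into the DVV recursion \cite{DVV} for $\la\tau_{d_1}\cdots\tau_{d_n}\ra_{g,n}$, i.e.\ the Virasoro constraints / Witten--Kontsevich theorem \cite{K1992, W1991}. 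The main obstacle throughout --- and the reason I would lean on \cite{MP2012} rather than redo it --- is the bookkeeping that makes \emph{polynomiality} propagate: one must verify that the right-hand side of (\ref{eq:FC recursion}) is genuinely free of poles along $t_1=\pm t_j$, $t_1=0$, and $t_j=0$ when the inputs are Laurent polynomials of the predicted degrees, and that the degree bound is saturated so that integration in $t_1$ lands exactly in degree $3(2g-2+n)$; these cancellations are precisely the content of the inductive step in \cite{MP2012}.
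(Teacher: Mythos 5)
Your proposal is correct and follows essentially the same route as the paper: the paper also deduces Theorem~\ref{thm:geometry} by observing that the recursion (\ref{eq:FC recursion}) is identical in form to \cite[Theorem~5.1]{MP2012} and that the initial values $F_{0,3}^C$ and $F_{1,1}^C$ agree with \cite[(6.1), (6.2)]{MP2012}, so all listed properties transfer from \cite{MP2012}. Your added remarks on fixing the integration constant and on restricting to top-degree terms to recover the DVV recursion are consistent elaborations of that same argument rather than a different method.
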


\begin{rem}
The above theorem explains why  
 the Laplace transform of the
generalized Catalan numbers is important. 
The function
$F_{g,n}^C(t_1,\dots,t_n)$ knows a lot of 
topological information of both 
 $\cM_{g,n}$ {and} 
$\Mbar_{g,n}$. 
\end{rem}

Taking the $n$-fold differentiation of
(\ref{eq:FC recursion}), we obtain a 
residue form of the recursion. The formula given in
(\ref{eq:CEO}) is an example of the
Eynard-Orantin topological recursion.

\begin{thm}[\cite{DMSS}]
The Laplace transform of the Catalan numbers of
type $(g,n)$ defined as a symmetric differential form
$$
W_{g,n}^C(t_1,\dots,t_n) =
(-1)^n
\sum_{(\mu_1,\dots,\mu_n)\in\bZ_+ ^n}
C_{g,n}(\mu_1,\dots,\mu_n) \;e^{-\la w,\mu\ra}
dw_1\cdots dw_n
$$
satisfies the Eynard-Orantin recursion with respect
to the Lagrangian immersion
\begin{equation}
\label{eq:Catalan immersion}
\Sigma=\bC \owns z\longmapsto
(x(z),y(z))\in T^*\bC,
\qquad 
\begin{cases}
x(z) = z+\frac{1}{z}\\
y(z) = -z
\end{cases}.
\end{equation}
The recursion formula is given by
a residue transformation equation
\begin{multline}
\label{eq:CEO}
W_{g,n}^C(t_1,\dots,t_n)
=
\frac{1}{2\pi i}\int_\gam
K^C(t,t_1)
\Bigg[
\sum_{j=2}^n
\bigg(
W_{0,2}^C(t,t_j)W_{g,n-1}^C
(-t,t_2,\dots,\widehat{t_j},
\dots,t_n)
\\
+
W_{0,2}^D(-t,t_j)W_{g,n-1}^C
(t,t_2,\dots,\widehat{t_j},
\dots,t_n)
\bigg)
\\
+
W_{g-1,n+1}^C(t,{-t},t_2,\dots,t_n)
+
\sum^{\text{stable}} _
{\substack{g_1+g_2=g\\I\sqcup J=\{2,3,\dots,n\}}}
W_{g_1,|I|+1}^C(t,t_I) W_{g_2,|J|+1}^C({-t},t_J)
\Bigg].
\end{multline}
The kernel function is defined to be
\begin{equation}
\label{eq:kernel}
K^C(t,t_1) = 
\half\;\frac{\int_t ^{-t}W_{0,2}( \;\cdot\;,t_1)}
{W_{0,1}(-t)-W_{0,1}(t)}=
-\frac{1}{64} 
\left(
\frac{1}{t+t_1}+\frac{1}{t-t_1}
\right)
\frac{(t^2-1)^3}{t^2}\cdot \frac{1}{dt}\cdot dt_1,
\end{equation}
which is an algebraic operator contracting
$dt$, while multiplying $dt_1$.
The 
contour integration is taken with respect to 
$t$ on the curve defined in Figure~\ref{fig:contourC}.
\end{thm}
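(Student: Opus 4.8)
The plan is to derive the residue recursion (\ref{eq:CEO}) directly from the differential recursion of Theorem~\ref{thm:FC recursion}, which is already at our disposal: one differentiates (\ref{eq:FC recursion}) in the variables $t_2,\dots,t_n$, appends the one-form $dt_1$ so that the left side becomes the Eynard-Orantin form $W^C_{g,n}$, and then recognizes the right side as a contour integral against the kernel $K^C$. So the work divides into two parts: (i) establishing the geometry of the spectral curve (\ref{eq:Catalan immersion}) and verifying the closed form (\ref{eq:kernel}) of $K^C$; (ii) checking that the residue formula (\ref{eq:CEO}), expanded by Cauchy's theorem, reproduces the differentiated form of (\ref{eq:FC recursion}) term by term.

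First I would fix the coordinate dictionary. With $z=(t+1)/(t-1)$ one has $x=z+\tfrac1z=2(t^2+1)/(t^2-1)$, so $dx=-8t\,dt/(t^2-1)^2$, while $y=-z$. The involution $t\mapsto-t$ sends $z\mapsto 1/z$ and fixes $x$; it is the deck transformation of the double cover $x\colon\Sigma\to\bP^1$ near each branch point $t=0,\infty$, which is what $\pm t$ refers to in (\ref{eq:CEO}). From $W_{0,1}=-z\,dx$ I compute $W_{0,1}(-t)-W_{0,1}(t)=(z-\tfrac1z)\,dx=-32\,t^2\,dt/(t^2-1)^3$, and from the Bergman kernel $W_{0,2}=dt_1\,dt_2/(t_1-t_2)^2$ of the rational curve $\Sigma$ I get $\int_t^{-t}W_{0,2}(\,\cdot\,,t_1)=\bigl(\tfrac1{t+t_1}+\tfrac1{t-t_1}\bigr)dt_1$. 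Substituting these into the definition of $K^C$ produces exactly the rational expression in (\ref{eq:kernel}); moreover $K^C(\,\cdot\,,t_1)$ then has simple poles in $t$ only at $t=\pm t_1$, with coinciding principal parts $-\tfrac1{64}(t_1^2-1)^3t_1^{-2}(t\mp t_1)^{-1}\,dt_1$, since $(t^2-1)^3/t^2$ is even.

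Next I would differentiate (\ref{eq:FC recursion}) by $d_2\cdots d_n$ and append $dt_1$. Since $W^C_{g,n}=d_1\cdots d_nF^C_{g,n}$, the left side becomes $W^C_{g,n}(t_{[n]})$, and on the right each derivative of an $F^C$ becomes the corresponding $W^C$: the terms with $\partial_{t_1}F^C_{g,n-1}$ and $\partial_{t_j}F^C_{g,n-1}$ give $W^C_{g,n-1}$, the mixed derivative $\partial_{u_1}\partial_{u_2}F^C_{g-1,n+1}\big|_{u_1=u_2=t_1}$ gives $W^C_{g-1,n+1}(t_1,-t_1,t_2,\dots,t_n)$ — here one uses that $\partial_{u_1}\partial_{u_2}F^C$ is even in each of $u_1,u_2$, a consequence of the parity of $F^C_{g,n}$ (Theorem~\ref{thm:geometry}) — and each bilinear term gives a product $W^C_{g_1,|I|+1}W^C_{g_2,|J|+1}$. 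To finish, I would expand the right side of (\ref{eq:CEO}) by residues: deforming $\gamma$, the residues of $K^C$ at $t=\pm t_1$ contribute the terms of (\ref{eq:FC recursion}) carrying $(t_1^2-1)^3/t_1^2$ — the $(g-1,n+1)$ term and the stable-splitting sum — the two residues adding because the bracket in (\ref{eq:CEO}) is $t\mapsto-t$ invariant, while the poles of $W_{0,2}^C(t,t_j)$ and $W_{0,2}^D(-t,t_j)$ at $t=\pm t_j$ contribute the first two lines, the identity $t_j/(t_1^2-t_j^2)=\tfrac12\bigl(1/(t_1-t_j)+1/(t_1+t_j)\bigr)$ being exactly the combination produced by the two $(0,2)$-kernels with their conjugate arguments. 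Equality of the two sides as meromorphic one-forms in $t_1$ — both of which are residueless — then yields (\ref{eq:CEO}), and the base cases $(g,n)=(1,1),(0,3)$ are checked directly against the explicit $F^C_{1,1}$ and $F^C_{0,3}$.

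I expect the main obstacle to be the residue and sign bookkeeping in the last step: cleanly separating the contribution of the diagonal-regular kernel $W_{0,2}^C$ from the diagonal-singular $W_{0,2}^D$ in the first line of (\ref{eq:CEO}), handling the interaction of the kernel's poles at $t=\pm t_1$ with the poles of the $W_{0,2}$-factors and with the branch points $t=0,\infty$, and verifying that the $\tfrac1{dt}$-contraction in $K^C$ is paired with the correct $dt$-factor of the bracket so that no spurious residue is collected. Once the correspondence ``multiplication by a rational function of $t_1$ followed by an integration in $t_1$ $\longleftrightarrow$ contour integration against $K^C$'' has been set up, the remainder is the direct but lengthy computation of \cite{DMSS}.
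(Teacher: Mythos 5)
Your proposal takes essentially the same route as the paper, which merely remarks that the $n$-fold differentiation of the differential recursion (\ref{eq:FC recursion}) yields the residue form (\ref{eq:CEO}) and defers the detailed residue/sign bookkeeping to \cite{DMSS}; your derivation of the kernel (\ref{eq:kernel}) from $W_{0,1}=-z\,dx$ and the Bergman kernel of $\Sigma$ is correct and complete. Two small slips that do not affect the strategy: the partial-fraction identity should read $\frac{t_j}{t_1^2-t_j^2}=\frac{1}{2}\bigl(\frac{1}{t_1-t_j}-\frac{1}{t_1+t_j}\bigr)$, and the evenness of $\partial_{u_1}\partial_{u_2}F^C_{g-1,n+1}$ under $u_i\mapsto -u_i$ does not follow from the reciprocity $t\mapsto 1/t$ stated in Theorem~\ref{thm:geometry} but is a separate parity property, provable by induction on (\ref{eq:FC recursion}) and established in \cite{DMSS}.
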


\begin{figure}[htb]
\centerline{\epsfig{file=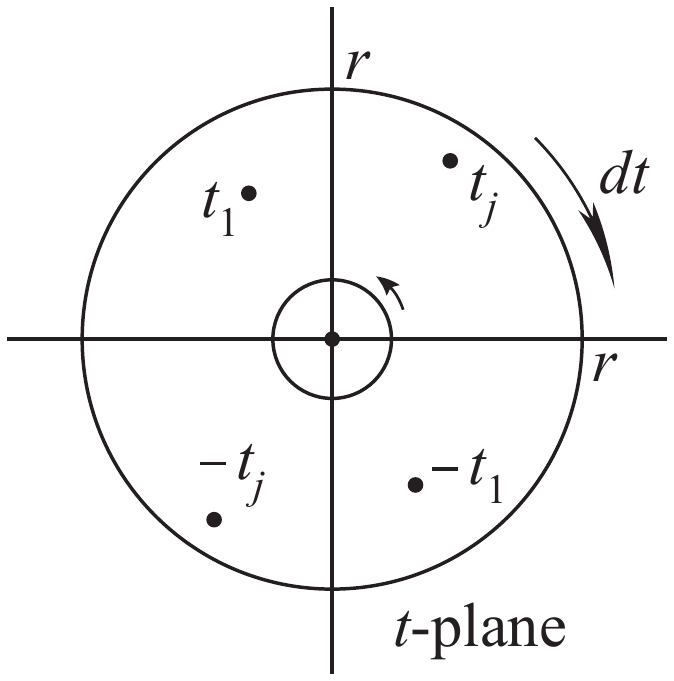, width=1.5in}}
\caption{The integration contour $\gamma$.}
\label{fig:contourC}
\end{figure}

\begin{rem}
The recursion  (\ref{eq:CEO}) is 
a universal formula compared to 
(\ref{eq:FC recursion}), because the only
input is the spectral curve $\Sigma$ 
that is realized as a 
Lagrangian immersion, which determines
$W_{0,1}$, and $W_{0,2}$ can be 
defined by taking the difference of the log
of prime forms of $\Sigma$ and $\bC$.
\end{rem}

\section{The partition function for the
generalized Catalan numbers and the
Schr\"odinger equation}

Let us now consider the 
exponential generating function 
of the Poincar\'e polynomial
$F_{g,n}^C(t,\dots,t)$. This function 
is called the \textbf{partition function}
 for the generalized Catalan numbers:
\begin{equation}
\label{eq:ZC}
Z^C(t,\hbar)=\exp\left(
\sum_{g=0}^\infty\sum_{n=1}^\infty
\frac{1}{n!}\;
\hbar ^{2g-2+n}F_{g,n}^C(t,t,\dots,t)
\right).
\end{equation}
The constant ambiguity in (\ref{eq:F01})
makes the partition function well defined up 
an overall non-zero constant factor.

\begin{thm}[\cite{MSul}]
\label{thm:Sch}
The 
partition function satisfies the following
Schr\"odinger equation
\begin{equation}
\label{eq:Sch}
\left(
\hbar^2 \frac{d^2}{dx^2}+\hbar x\frac{d}{dx}+1
\right)
Z^C(t,\hbar) = 0,
\end{equation}
where $t$ is considered as a function in $x$
by
$$
t= t(x) = \frac{z(x)+1}{z(x)-1}
$$
and  {\rm{(\ref{eq:Catalan z})}}. Moreover,
the partition function has a matrix
integral expression
\begin{equation}
\label{eq:matrix}
Z^C(z,\hbar) = \int_{\cH_{N\times N}}
\det(1-\sqrt{s}X)^N e^{-\frac{N}{2}\trace (X^2)}
dX
\end{equation}
with the identification {\rm{(\ref{eq:s})}} and 
$\hbar = 1/N$. Here $dX$ is the normalized
Lebesgue measure on the space of $N\times N$
Hermitian matrices $\cH_{N\times N}$.
It is a well-known fact that this matrix integral
 is  the principal 
specialization of a KP $\tau$-function 
\cite{M1994}.
\end{thm}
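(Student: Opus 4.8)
The plan is to put the Schr\"odinger equation \eqref{eq:Sch} into WKB form and verify it order by order in $\hbar$, feeding in the principal specialization of the differential recursion of Theorem~\ref{thm:FC recursion}. Write $S_m := \sum_{2g-2+n=m-1}\frac1{n!}F^C_{g,n}(t,\dots,t)$, so that $\log Z^C = \hbar^{-1}\sum_{m\ge 0}\hbar^m S_m$ with $S_0 = F^C_{0,1}(t)$ and $S_1 = \tfrac12 F^C_{0,2}(t,t)$; throughout a prime denotes $d/dx$, with $z$ and $t$ viewed as functions of $x$ via \eqref{eq:x=z+1/z} and $z=(t+1)/(t-1)$. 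Substituting $Z^C=e^{\log Z^C}$ and dividing \eqref{eq:Sch} by $Z^C$ turns it into $\hbar^2\big((\log Z^C)'' + ((\log Z^C)')^2\big) + \hbar x\,(\log Z^C)' + 1 = 0$; inserting the $\hbar$-expansion — an identity of formal power series in $\hbar$, so that the overall constant ambiguity of $Z^C$ is irrelevant — this is equivalent to the hierarchy
\[
(S_0')^2 + xS_0' + 1 = 0, \qquad \sum_{a+b=m}S_a'S_b' + xS_m' + S_{m-1}'' = 0 \quad (m\ge 1).
\]
The first equation is the spectral curve: by \eqref{eq:W01}, $dF^C_{0,1}=-z\,dx$, hence $S_0'=-z$, and $z^2-xz+1=0$ by \eqref{eq:x=z+1/z}. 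In particular $y:=S_0'=-z$ is the $y$-coordinate of the Lagrangian immersion \eqref{eq:Catalan immersion}, so that the operator in \eqref{eq:Sch} is the normal-ordered quantization of $y^2+xy+1=0$. The case $m=1$, namely $(2S_0'+x)S_1'+S_0''=0$, follows from a direct computation with \eqref{eq:F01}--\eqref{eq:F02}: taking $S_1=-\tfrac12\log(1-z^2)$ gives $S_1'=-z^3/(1-z^2)^2$ and $S_0''=z^2/(1-z^2)$, and with $2S_0'+x=(1-z^2)/z$ the left side collapses to $0$.

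For $m\ge2$ I would feed in the recursion. Summing \eqref{eq:FC recursion} (equivalently its differentiated form \eqref{eq:CEO}) against $\hbar^{2g-2+n}/n!$ over all stable $(g,n)$, with the $t_i$ ultimately set equal to $t$, one wants: the left side to produce $S_m'$ (after passing from $d/dt$ to $d/dx$ by the chain rule, $dx/dz=1-z^{-2}$); the $F^C_{g-1,n+1}$ and $F^C_{g,n-1}$ terms to rebuild $S_{m-1}''$; the sum over stable set partitions to rebuild $\sum_{a+b=m}S_a'S_b'$ — a bookkeeping check on the grading $2g-2+n$ shows the split pieces land at total order exactly $m$ — and the explicit $x$ in the operator, which is absent from the recursion, to be supplied by the finite part of the coincidence limit (it enters through $S_0'=-z$ and $x=z+1/z$). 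The mechanism that makes all this fit is the algebraic identity $2S_0'+x = 1/z-z = -z\,dx/dz$, which is exactly what absorbs the rational prefactors $\tfrac{(t^2-1)^3}{t^2}$ of \eqref{eq:FC recursion} after the change of variables, together with the fact that the singular kernels $t_j/(t_1^2-t_j^2)$ are resolved by l'H\^opital against brackets that vanish at $t_1=t_j$. Matching order by order in $\hbar$ then yields the $m$-th equation of the hierarchy, hence \eqref{eq:Sch}. I expect this coincidence-limit computation to be the main obstacle: extracting the finite parts correctly and verifying that the family of rational prefactors conspires to convert the $t$-recursion into the clean $x$-hierarchy is where the particular shape $x=z+1/z$ (genus $0$, two branch points) is used, and it is the computational heart of \cite{MZhou} and \cite{MSul}. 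As a byproduct, since $2S_0'+x=1/z-z\not\equiv 0$, the hierarchy determines every $S_m$, $m\ge1$, from $S_0$ up to a constant, which gives an alternative way to organize the argument and will be useful for the matrix model.

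For the matrix-integral expression I would expand the characteristic-polynomial insertion, $\det(1-\sqrt sX)^N = \exp\big(-N\sum_{k\ge1}\tfrac{s^{k/2}}{k}\trace X^k\big)$, perform the Gaussian integral over $\cH_{N\times N}$ by Wick contractions, and read off the resulting fat-graph (Feynman-diagram) expansion, which is a generating series of connected cellular graphs weighted by $1/|\Aut|$ — i.e. governed by the numbers $D_{g,n}$. Identifying this expansion with $\log Z^C$ requires matching the matrix coupling $s$ to the spectral coordinate; equivalently, one checks that the one-cut spectral curve of this one-matrix model, with effective potential $\tfrac12 X^2-\log(1-\sqrt sX)$, is exactly \eqref{eq:x=z+1/z}, the positions of its branch points pinning down $s=(t+1)^2/(4t)$ through \eqref{eq:wxzt}; this is carried out in \cite{DMSS}. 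An alternative is to derive a second-order linear ODE in $s$ for the scalar integral from a Schwinger--Dyson (integration-by-parts) identity, transform it by the same change of variables into \eqref{eq:Sch}, and then use the uniqueness built into the hierarchy above, together with the fact that the planar one-point resolvent of the model recovers $F^C_{0,1}$, to conclude that the integral equals $Z^C$. Finally, the statement that the integral is a principal specialization of a KP $\tau$-function is the cited fact of \cite{M1994}, the integral being of Kontsevich--Penner type. The delicate point here is the precise dictionary between the matrix coupling and the spectral coordinate and the bookkeeping of the $1/N$-grading — most transparently extracted from the branch points of the genus-$0$ resolvent.
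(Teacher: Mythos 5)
Two preliminary remarks. The paper itself contains no proof of Theorem~\ref{thm:Sch}: it is quoted from \cite{MSul}, with the KP statement referred to \cite{M1994}. Your WKB reduction is indeed the strategy of that cited source, and your low-order checks are correct: dividing \eqref{eq:Sch} by $Z^C$, expanding $\log Z^C=\sum_{m\ge0}\hbar^{m-1}S_m$, and verifying $(S_0')^2+xS_0'+1=0$ and $(2S_0'+x)S_1'+S_0''=0$ from \eqref{eq:W01}, \eqref{eq:F01}, \eqref{eq:F02} all go through (I confirm $S_0'=-z$, $S_0''=z^2/(1-z^2)$, $S_1'=-z^3/(1-z^2)^2$, $2S_0'+x=(1-z^2)/z$). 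However, for $m\ge2$ what you give is a plan, not a proof: the claim that the principal specialization of \eqref{eq:FC recursion}, summed against $\hbar^{2g-2+n}/n!$, reassembles into $\sum_{a+b=m}S_a'S_b'+xS_m'+S_{m-1}''=0$ \emph{is} the content of the theorem, and you defer exactly the hard steps -- the diagonal limits of the $t_j/(t_1^2-t_j^2)$ kernels, the reconstruction of the full $\frac{d^2}{dx^2}$ of the principal specialization out of the mixed-derivative term together with the $(g,n-1)$ terms, and the conversion of the prefactors $(t^2-1)^3/t^2$ under $t\mapsto x$ -- to \cite{MZhou} and \cite{MSul}, i.e.\ to the result being proven. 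The identity $2S_0'+x=-z\,dx/dz$ you isolate is indeed the pivot of that computation, but as a standalone argument this part is a genuine gap.

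The matrix-integral half has a more substantive problem. The Wick/'t~Hooft expansion of \eqref{eq:matrix} does produce cellular graphs, but not with the weights your identification needs: writing $\det(1-\sqrt sX)^N=\exp\bigl(-N\sum_{k\ge1}\tfrac{s^{k/2}}{k}\trace X^k\bigr)$, every vertex carries a factor $N$, every propagator $1/N$, every index loop $N$, so a connected graph of type $(g,n)$ with $e$ edges contributes $(-1)^nN^{2-2g}s^{e}/|\Aut\Gamma|$, whereas in $\log Z^C$ the same graph contributes $N^{2-2g-n}x^{-2e}/|\Aut\Gamma|$; moreover $s=(t+1)^2/(4t)$ is not $x^{-2}=\frac{(t^2-1)^2}{4(t^2+1)^2}$. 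So the fat-graph expansion cannot simply be ``read off'' as $\log Z^C$, and matching the planar one-cut spectral curve with \eqref{eq:x=z+1/z} only constrains $W_{0,1}$; it does nothing to reconcile the two different $\hbar$-gradings, which is precisely where the nontrivial content of \eqref{eq:matrix} lives (it enters through the principal-specialization/Poincar\'e-polynomial structure of Theorem~\ref{thm:geometry}, cf.\ \cite{MP2012, DMSS}, not through a term-by-term diagram match). Your fallback -- a Schwinger--Dyson ODE in $s$ for the scalar integral plus the uniqueness of the $\hbar$-hierarchy given $S_0$ -- is the more promising route, but it is also only sketched, and it still requires establishing the dictionary \eqref{eq:s} at leading order, which is the very point left open.
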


The currently emerging picture
\cite{BE2,DFM,GS} is the following. 
If we start with the A-polynomial of a
knot $K$ and consider the Lagrangian 
immersion it defines, like
the one in (\ref{eq:Catalan immersion}),
then the partition
function $Z$ of the Eynard-Orantin recursion,
defined in a much similar way as in 
(\ref{eq:ZC}) but with a theta function correction
factor of \cite{BE2}, \emph{is} the 
colored Jones polynomial of $K$, and
the corresponding
Schr\"odinger equation like (\ref{eq:Sch}) is
equivalent to the AJ-conjecture of 
\cite{Gar}.

Our example comes from an elementary
enumeration problem, yet as 
Theorem~\ref{thm:geometry} suggests, the
geometric information contained in this 
example is quite non-trivial.

\begin{ack}
The paper is based on the author's talk at the 
\emph{XXXI Workshop on the 
Geometric Methods in 
Physics} held in Bia\l owie\.za, Poland, 
in June 2012. He thanks the organizers
of the workshop for their hospitality and 
exceptional organization of the successful
workshop. 
The author also thanks
Ga\"etan Borot,
Vincent Bouchard,
Bertrand Eynard, 
Marcos Mari\~no,
Paul Norbury,
Yongbin Ruan,
Sergey Shadrin,
Piotr Su\l kowski, and
Don Zagier
for their tireless and patient explanations
of their work to the author, and for 
stimulating discussions.
The author's research 
was
 supported by NSF grants
DMS-1104734
and DMS-1104751.
\end{ack}


\providecommand{\bysame}{\leavevmode\hbox to3em{\hrulefill}\thinspace}

\bibliographystyle{amsplain}

\end{document}